\newtheorem{theorem}{Theorem}
\newtheorem{lemma}[theorem]{Lemma}
\begin{document}
%\linenumbers
\onehalfspace

\title{Efficiently recognizing graphs with equal\\ independence and annihilation numbers}
\author{Johannes Rauch\and Dieter Rautenbach}
\date{}

\maketitle
\vspace{-10mm}
\begin{center}
{\small 
Institute of Optimization and Operations Research, Ulm University, Ulm, Germany\\
\texttt{$\{$johannes.rauch,dieter.rautenbach$\}$@uni-ulm.de}
}
\end{center}

\begin{abstract}
The annihilation number $a(G)$ of a graph $G$ 
is an efficiently computable upper bound 
on the independence number $\alpha(G)$ of $G$.
Recently, Hiller observed that a characterization of the graphs $G$
with $\alpha(G)=a(G)$ due to Larson and Pepper is false.
Since the known efficient algorithm recognizing these graphs 
was based on this characterization, 
the complexity of recognizing graphs $G$ with $\alpha(G)=a(G)$
was once again open.
We show that these graphs can indeed be recognized efficiently.
More generally, we show that recognizing graphs $G$ with $\alpha(G)\geq a(G)-\ell$
is fixed parameter tractable using $\ell$ as parameter.\\[3mm]
{\bf Keywords:} Independence number; annihilation number
\end{abstract}

\section{Introduction}

We consider finite, simple, and undirected graphs, and use standard terminology.
Let $G$ be a graph with $n$ vertices, $m$ edges, 
and degree sequence $d_1\leq \ldots \leq d_n$.
If $d_1+\cdots+d_{a+1}>m$ for some integer $a$ in $\{ 1,\ldots,n-1\}$, 
then $G$ has no independent set of order $a+1$.
In fact, if $X$ is any set of $a+1$ vertices of $G$, 
then the degree sum $d_G(X)=\sum\limits_{u\in X}d_G(u)$ of $X$
is at least $d_1+\cdots+d_{a+1}$, and, hence,
strictly larger than the number $m$ of edges of $G$,
which implies that at least one of the edges of $G$ has both its endpoints in $X$.
This observation implies that the {\it annihilation number} $a(G)$ of $G$
defined by Pepper \cite{pe} as
$$a(G)=\max\big\{ a\in \{ 1,\ldots,n\}:d_1+\cdots+d_a\leq m\big\}$$
is an efficiently computable upper bound 
on the computationally hard \cite{ka} 
{\it independence number} $\alpha(G)$ of $G$,
that is,
$$\alpha(G)\leq a(G)\mbox{ for every graph $G$.}$$
Larson and Pepper \cite{lape} proposed a characterization 
of the graphs $G$ that satisfy $\alpha(G)=a(G)$
leading to a polynomial time recognition algorithm for these graphs.
Recently, Hiller \cite{hi} pointed out an error in the proof of this characterization,
and provided numerous counterexamples to its statement.
She showed however that the characterization of Larson and Pepper 
holds for bipartite graphs and connected claw-free graphs.
Since, for these two classes of graphs, 
the independence number can be determined efficiently \cite{sb},
bipartite or claw-free graphs $G$ with $\alpha(G)=a(G)$
can easily be recognized efficiently 
by simply computing and comparing both parameters.
By Hiller's contributions though, 
the complexity of recognizing general graphs $G$ with $\alpha(G)=a(G)$
is once again an open problem. 

We solve this problem by showing the following theorem.

\begin{theorem}\label{theorem1}
For a given graph $G$, 
one can determine in polynomial time whether $\alpha(G)=a(G)$.
\end{theorem}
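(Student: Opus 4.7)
\emph{Proof plan.} I would target the stronger statement advertised in the abstract---recognizing $\alpha(G)\geq a(G)-\ell$ is fixed parameter tractable with parameter $\ell$---and recover Theorem~\ref{theorem1} as the case $\ell=0$. The strategy is a sequence of safe reductions producing a kernel, followed by a brute-force search on the kernel.

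The first step is normalization by isolated vertices. If $v$ has degree $0$, then clearly $\alpha(G-v)=\alpha(G)-1$; moreover $a(G-v)=a(G)-1$, because the $k$ smallest degrees of $G$ are a zero followed by the $k-1$ smallest of $G-v$, so the threshold index defining $a$ merely shifts by one while the partial-sum inequality is unchanged. Hence the gap $a(G)-\alpha(G)$ is invariant and we may assume $\delta(G)\geq 1$. The second step exploits the degree-sum inequality that already underlies the definition of $a(G)$: any independent set $I$ with $|I|=a(G)-\ell$ satisfies $\sum_{v\in I}d_G(v)\leq m$, since every edge has at least one endpoint in $V(G)\setminus I$. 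Combined with $|I|=a(G)-\ell$, and with the slack $\Delta=m-(d_1+\cdots+d_{a(G)})<d_{a(G)+1}$, this forces the degree multiset of $I$ to be close to the multiset of $a(G)-\ell$ smallest degrees of $G$. I would use this to isolate a ``core'' consisting of the vertices whose degrees lie near the threshold $d_{a(G)}$ and to show that any feasible $I$ must coincide with the initial low-degree segment outside that core. The aim is a kernel of size bounded by a function of $\ell$, on which a maximum independent set can be found by brute force; for $\ell=0$ the kernel becomes trivial and the overall algorithm runs in polynomial time.

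The main obstacle is the kernelization step itself. Since Hiller refuted Larson and Pepper's concrete characterization, no clean structural description of the equality case is available off the shelf, and one must build a more flexible combinatorial reduction. I would expect the argument to encode the residual instance as a matching or matroid-intersection problem on an auxiliary bipartite structure that still respects the degree-sum bound, so that feasibility of an independent set of the required size (or a certificate of infeasibility) can be read off in polynomial time. Controlled branching over the at most $O(\ell)$ ``deviations'' of $I$ from the low-degree initial segment should then provide the FPT bound for general $\ell$.
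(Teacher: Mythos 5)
Your two preliminary steps are fine: deleting an isolated vertex decreases both $\alpha$ and $a$ by one, and $d_G(I)\leq m$ holds for every independent set $I$. The kernelization that carries the rest of the plan, however, does not go through. The degree-sum bound constrains only the \emph{multiset} of degrees appearing in $I$, and even that only up to a slack that is not bounded in terms of $\ell$; more importantly, it says nothing about \emph{which} vertices of a given degree belong to $I$. Take $G$ to be a perfect matching on $n$ vertices, or any regular graph: every vertex has the threshold degree $d_{a(G)}$, so your ``core'' of vertices with degrees near the threshold is all of $V(G)$, the assertion that $I$ coincides with a low-degree initial segment outside the core is vacuous, and no kernel of size bounded by a function of $\ell$ emerges --- already for $\ell=0$ the residual instance is the entire graph. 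The computational difficulty of the problem lives entirely in choosing among vertices of equal degree, and no argument that consults only the degree sequence can resolve that choice. Your closing remark about encoding the residue as a matching or matroid-intersection problem points in the right direction, but as stated it supplies no mechanism and no proof.

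The paper's route is genuinely different and is the ingredient you are missing. Setting $k=a(G)-\frac{n-1}{2}\geq 0$, Lemma \ref{lemma1} shows that every subset $A$ of a maximum independent set $I$ satisfies $|A|-|N_G(A)|\leq 2k$; the proof swaps $A$ for $N_G(A)$ inside a set of $n-a-1$ high-degree vertices and contradicts the definition of $a(G)$. Hall's theorem then produces a matching from $I$ into $R=V(G)\setminus I$ saturating all but exactly $2k$ vertices of $I$, and when $\alpha(G)=a(G)$ this matching misses exactly one vertex $r$ of $R$ and is a maximum matching of $G-r$. Guessing $r$ (only $n$ choices), computing any maximum matching $N$ of $G-r$, and encoding the binary choice of which endpoint of each edge of $N$ lies in $I$ as a $2$-{\sc Sat} instance yields the polynomial-time test. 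For general $\ell$ the same matching argument bounds $\tau(G)-\mu(G)\leq 3\ell+1$ and reduces to the fixed-parameter-tractable ``vertex cover above maximum matching'' problem; no kernel is constructed at all. If you want to salvage your outline, you need a structural statement in the spirit of Lemma \ref{lemma1} that converts the annihilation bound into matching-theoretic rather than degree-theoretic information.
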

Theorem \ref{theorem1} is actually a consequence of the following 
more general result.
\begin{theorem}\label{theorem2}
For a given pair $(G,\ell)$, where $G$ is a graph, and $\ell$ is a non-negative integer, 
one can decide whether $\alpha(G)\geq a(G)-\ell$
in time $O\left(15^{(3\ell+1)} \ell (n+m)^3\right)$.
\end{theorem}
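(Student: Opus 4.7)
The plan is to design a bounded search tree algorithm. First, $a(G)$ is computable in polynomial time from the degree sequence, so the task reduces to deciding whether $G$ admits an independent set of size at least $a(G)-\ell$. At each recursive call I maintain a partial independent set $I_0$, a set of excluded vertices $X$, and a residual parameter $\ell' \leq \ell$, with the invariant that the original instance is YES if and only if $I_0$ extends to an independent set of $G-X$ of total size at least $a(G)-\ell$.

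The algorithm exhaustively applies reduction rules before branching. Natural reductions include removing isolated vertices (both $\alpha$ and $a$ drop by $1$, $\ell'$ unchanged); peeling off bipartite or connected claw-free components, on which the Larson--Pepper characterization holds by Hiller's refinement and $\alpha$ is polynomially computable, so the local contribution to the global deficiency is fixed; and committing forced vertices detected via twins, dominated vertices, or modules of bounded size, with an appropriate update to $\ell'$. When no reduction fires, the plan is to locate a small constant-sized \emph{obstruction} $S\subseteq V$, chosen so that every maximum independent set of the residual graph follows one of at most $15$ local patterns on $S$. Branching on these patterns and recursing gives a search tree of branching factor $15$ and depth $3\ell+1$, provided each branch decreases a suitable potential function (of order $3\ell'$) by at least one; with $O((n+m)^3)$ polynomial work per node, dominated by the maximum-matching and LP-type computations used inside the reductions, this matches the claimed running time.

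The main obstacle is identifying the right obstruction and establishing the potential decrease. The specific constants $15$ and $3\ell+1$ hint that $S$ has four or five vertices together with a bounded-size neighborhood; I would search for $S$ inside the Gallai--Edmonds decomposition of a maximum matching, since the factor-critical components of $D(G)$ together with their $A(G)$-attachments are what drive the gap $a(G)-\alpha(G)$ in the graphs that fall outside Hiller's two easy classes. The key lemma to prove would be that every irreducible instance either lies in a polynomially solvable class or contains such an $S$, and that each of its at most $15$ completions either closes the gap by at least one unit (reducing $\ell'$) or strictly shrinks the instance. Establishing exactly the $1/3$-decrease-per-branch bound is the delicate combinatorial step on which the whole parameter dependence rests.
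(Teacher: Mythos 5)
Your proposal is a plan rather than a proof, and the step it leaves open is precisely the one that carries all the difficulty. You explicitly defer the central claim --- that every irreducible instance contains a constant-sized obstruction $S$ whose at most $15$ completions each decrease a potential of order $3\ell'$ --- and you give no argument for why such an obstruction should exist, why $15$ patterns suffice, or why each branch makes progress. Nothing in the proposal connects the annihilation number to the structure of the residual graph, so there is no reason the search tree depth should be bounded by a function of $\ell$ at all: deciding $\alpha(G)\geq t$ is W[1]-hard in general, and the only thing that makes this instance tractable is a structural consequence of the definition of $a(G)$ that your sketch never extracts. Your guess about the provenance of the constants is also off: the $15^{3\ell+1}$ is not a hand-crafted branching factor on a four- or five-vertex obstruction but is inherited wholesale from the Razgon--O'Sullivan algorithm for \textsc{Almost 2-Sat}.

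The paper's actual route is much shorter and hinges on one lemma you are missing. Using the inequality $d_{a+2}+\cdots+d_n<m$ one shows (Lemma \ref{lemma1}) that every subset $A$ of a maximum independent set $I$ satisfies $|A|-|N_G(A)|\leq 2k$ with $k=a-\frac{n-1}{2}$; Hall's theorem then yields a matching saturating all but at most $2k$ vertices of $I$ into $R=V(G)\setminus I$. If $\alpha(G)\geq a-\ell$, a short count gives $\mu(G)\geq n-a-2\ell-1$, so the target question ``$\tau(G)\leq n-a+\ell$?'' is an instance of \textsc{Vertex Cover} parameterized \emph{above the maximum matching} with excess $p=n-a+\ell-\mu(G)\leq 3\ell+1$. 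That problem is fixed parameter tractable by the reduction of Mishra et al.\ to \textsc{Almost 2-Sat} combined with the Razgon--O'Sullivan algorithm, which is where the stated running time comes from. Without the matching lemma you have no bound on the above-guarantee parameter, and without invoking (or reproving) the \textsc{Almost 2-Sat} result you have no FPT algorithm; your Gallai--Edmonds intuition points in a vaguely related direction but does not substitute for either ingredient.
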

The following section contains the proofs.
For Theorem \ref{theorem1}, we give a simple self-contained argument.
For Theorem \ref{theorem2}, 
we rely on deep results from the area of fixed parameter tractability.

\section{Proofs}

Let $G$ be a graph with $n$ vertices, $m>0$ edges, 
degree sequence $d_1\leq \ldots \leq d_n$, and
annihilation number $a$.
Since $d_1+\cdots+d_n=2m$ and
$d_1+\cdots+d_{\left\lceil \frac{n-1}{2}\right\rceil}\leq 
d_{\left\lceil \frac{n-1}{2}\right\rceil+1}+\cdots+d_n$,
we have $d_1+\cdots+d_{\left\lceil \frac{n-1}{2}\right\rceil}\leq m$,
and, hence, $a\geq \frac{n-1}{2}$.
This implies $k\geq 0$ for $k=a-\frac{n-1}{2}$.
Note that $2k$ is an integer.
Furthermore, $m>0$ implies $a\leq n-1$, and, hence, $k\leq \frac{n-1}{2}$.

By the definition of the annihilation number, we have $d_1+\cdots+d_{a+1}>m$,
which implies
\begin{eqnarray}\label{e1}
d_{a+2}+\cdots+d_n<m.
\end{eqnarray}
Let $I$ be a maximum independent set in $G$.
The set $R=V(G)\setminus I$ satisfies $|R|=n-|I|\geq n-a=\frac{n+1}{2}-k$.

\begin{lemma}\label{lemma1}
In the above setting, $|A|-|N_G(A)|\leq 2k$ for every subset $A$ of $I$.
\end{lemma}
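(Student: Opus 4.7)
The plan is to prove the lemma by contradiction. Assume there exists $A\subseteq I$ with $|A|-|N_G(A)|\ge 2k+1$, and partition $V(G)=A\sqcup B\sqcup C\sqcup D$, where $B:=N_G(A)$, $C:=R\setminus B$, and $D:=I\setminus A$. By construction $A$ is independent, every edge of $G$ incident to $A$ goes to $B$, and there is no edge of $G$ between $A$ and $C$.

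First I would estimate $|A\cup C|$ from below. Using $|R|\ge\frac{n+1}{2}-k$ together with the hypothesis,
\[
|A\cup C|=|A|-|B|+|R|\ge(2k+1)+\frac{n+1}{2}-k=a+2,
\]
so $|V\setminus(A\cup C)|=|B\cup D|\le n-a-2$. Bounding $d_G(B\cup D)$ by the $|B\cup D|$ largest degrees of $G$ and applying (\ref{e1}) yields $d_G(B\cup D)\le d_{a+3}+\cdots+d_n<m-d_{a+2}$, whence $d_G(A\cup C)>m+d_{a+2}$. Expanding $d_G(A\cup C)$ via the four-way partition (with all $A$-edges going to $B$ and no $A$--$C$ edges) turns this into the edge-count inequality $e_G(C)>e_G(B)+e_G(B,D)$.

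Next I would use the maximality of $I$. Because $A$ is independent and isolated from $C$ in the induced subgraph $G[A\cup C]$, the set $A\cup J$ is independent in $G$ for every independent $J\subseteq C$, which forces $\alpha(G[C])\le|I|-|A|=|D|$. More sharply, for every independent $J\subseteq C$ the set $(I\setminus N_G(J))\cup J$ is independent, whence $|J|\le|D\cap N_G(J)|$; in particular, every vertex of $C$ has a neighbour in $D$, so $C\subseteq N_G(D)$.

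The main obstacle is combining the edge-density lower bound on $G[C]$ from the second step with the Hall-type restriction from the third step to derive a contradiction. My plan is to sharpen the degree-sum estimate by applying (\ref{e1}) to the full $|A\cup C|$-element set (rather than just to a size-$(a+2)$ subset), so that the lower bound on $e_G(C)$ scales with the deficit $s:=|A|-|B|-2k\ge 1$; then a K\"onig-type argument in the bipartite subgraph on $C\cup D$, together with $C\subseteq N_G(D)$ and the sharp Hall condition $|J|\le|D\cap N_G(J)|$ for independent $J\subseteq C$, should force $|D|$ to exceed its natural bound $|I|-|A|$, delivering the contradiction.
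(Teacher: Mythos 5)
Your argument stops short of a proof: everything after the phrase ``the main obstacle'' is a plan rather than an argument, and the plan as stated does not go through. The facts you have actually established under the contradiction hypothesis --- the edge inequality $e_G(C)>e_G(B)+e_G(B,D)$ (in your notation $B=N_G(A)$, $C=R\setminus B$, $D=I\setminus A$), the bound $\alpha(G[C])\leq |D|$, the Hall-type condition $|J|\leq |D\cap N_G(J)|$ for independent $J\subseteq C$, and $C\subseteq N_G(D)$ --- are mutually consistent and do not force a contradiction: for instance, the abstract configuration $B=\emptyset$, $C=\{c_1,c_2\}$ with $c_1c_2\in E(G)$, $D=\{d_1\}$ adjacent to both $c_1$ and $c_2$ satisfies all of them. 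So some essential use of the degree-sum structure is still missing, and the proposed ``sharpened'' estimate plus ``K\"onig-type argument'' is too vague to certify that it supplies it. Note also that your lower bound $d_G(A\cup C)>m+d_{a+2}$ is barely more than what the definition of $a$ gives for \emph{any} set of at least $a+1$ vertices, so converting it into edge counts loses most of its force.

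The idea you are missing is an exchange argument based on the observation that, since $A$ is independent, every edge incident with $A$ ends in $N_G(A)$, whence $d_G(A)\leq d_G(N_G(A))$. The paper takes a set $B'\supseteq N_G(A)$ of exactly $n-a-1$ vertices inside $R$ (possible because $|N_G(A)|\leq a-2k-1<n-a-1\leq|R|$), notes that $d_G(B')<m$ by (\ref{e1}) since the $n-a-1$ largest degrees already sum to less than $m$, and then swaps $N_G(A)$ out and $A$ in: the resulting set $(B'\setminus N_G(A))\cup A$ has degree sum at most $d_G(B')<m$ but at least $|B'|+2k+1=a+1$ vertices, directly contradicting $d_1+\cdots+d_{a+1}>m$. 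This is a short, self-contained contradiction that needs none of the maximality or Hall-type machinery you set up; I would recommend abandoning the four-way partition and rebuilding the proof around the inequality $d_G(A)\leq d_G(N_G(A))$.
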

\begin{proof}
Suppose, for a contradiction, that $|A|-|N_G(A)|\geq 2k+1$ for some subset $A$ of $I$.
Since $I$ is independent, we have that $N_G(A)\subseteq R$.
Since $A$ is independent,
the number of edges of $G$ between $A$ and $N_G(A)$
is equal to $d_G(A)$ as well as less or equal to $d_G(N_G(A))$,
which implies $d_G(A)\leq d_G(N_G(A))$.
Clearly, $|N_G(A)|\leq |A|-2k-1\leq |I|-2k-1\leq a-2k-1=\frac{n-1}{2}-k-1$.
Let $B$ be a subset of $R$ that contains $N_G(A)$, 
and has order exactly $n-a-1=\frac{n-1}{2}-k<|R|$.
By (\ref{e1}), the largest $|B|=n-a-1$ degrees sum to less than $m$,
which implies $d_G(B)<m$.
The set $C=(B\setminus N_G(A))\cup A$ satisfies
$d_G(C)=d_G(B)-d_G(N_G(A))+d_G(A)\leq d_G(B)<m$
and 
$|C|=|B|-|N_G(A)|+|A|\geq |B|+2k+1=\frac{n-1}{2}+k+1=a+1$,
that is, the degree sum of the at least $a+1$ vertices in $C$ is at most $m$,
which contradicts the definition of the annihilation number.
\end{proof}
We are now in a position for the following.

\begin{proof}[Proof of Theorem \ref{theorem1}]
Let $G$ be as specified at the beginning of this section.
Let $G'$ arise from $G$ by adding a set $S$ of $2k$ new vertices
as well as all possible new edges between $I$ and $S$.
By Lemma \ref{lemma1} and Hall's theorem \cite{ha},
the graph $G'$ has a matching $M'$ saturating all vertices in $I$,
and only containing edges between $I$ and $R\cup S$.

Now, suppose that $\alpha(G)=a$, that is, in particular, $|I|=a$.
Since $|S|=2k\leq \frac{n-1}{2}+k=|I|$,
we may assume, by construction, that $M'$ saturates all vertices in $S$.
Removing from $M'$ the $2k$ edges incident with the vertices in $S$
implies that $G$ has a matching $M\subseteq M'$ 
saturating all but exactly $2k$ vertices in $I$,
and only containing edges between $I$ and $R$.
Since $|R|=\frac{n+1}{2}-k=\frac{n-1}{2}+k-2k+1=|I|-2k+1=|M|+1$,
the matching $M$ saturates all but exactly one vertex, say $r$, in $R$.
Since $R\setminus \{ r\}$ is a vertex cover of $G-r$,
the matching $M$ is a maximum matching of $G-r$,
and every maximum matching of $G-r$
\begin{itemize}
\item saturates all but exactly $2k$ vertices in $I$,
\item saturates all vertices in $R\setminus \{ r\}$,
and 
\item only contains edges between $I$ and $R\setminus \{ r\}$.
\end{itemize}
Now, let $N$ be any maximum matching of $G-r$,
that is, $|N|=|M|=a-2k=\frac{n-1}{2}-k$.
Let 
$N=\left\{ u_1v_1,\ldots,u_{\frac{n-1}{2}-k}v_{\frac{n-1}{2}-k}\right\}$.
By the above observations, 
the set $I_0$ of vertices of $G-r$ that are not saturated by $N$ is a subset of $I$.
The choice of $N$ implies that $I_0$ is independent.
Now, we describe a $2$-{\sc Sat} formula $f$ 
such that every satisfying truth assignment for $f$
allows to reconstruct a suitable choice for $I$.
The formula $f$ uses the boolean variables $x_1,\ldots,x_{\frac{n-1}{2}-k}$,
where $x_i$ corresponds to the edge $u_iv_i$ in $N$,
$x_i$ being true corresponds to $u_i\in I$, and 
$x_i$ being false corresponds to $v_i\in I$.
The formula $f$ contains the following polynomially many clauses:
\begin{itemize}
\item For every $i\in \left\{ 1,\ldots,\frac{n-1}{2}-k\right\}$,
\begin{itemize}
\item if $v_i$ has a neighbor in $I_0$,
then we add to $f$ the clause $x_i$, and
\item if $u_i$ has a neighbor in $I_0$,
then we add to $f$ the clause $\bar{x}_i$.
\end{itemize}
\item For every two distinct indices $i,j\in \left\{ 1,\ldots,\frac{n-1}{2}-k\right\}$,
\begin{itemize}
\item if $u_i$ is adjacent to $u_j$,
then we add to $f$ the clause $\bar{x}_i\vee \bar{x}_j$,
\item if $u_i$ is adjacent to $v_j$,
then we add to $f$ the clause $\bar{x}_i\vee x_j$,
\item if $v_i$ is adjacent to $u_j$,
then we add to $f$ the clause $x_i\vee \bar{x}_j$, and
\item if $v_i$ is adjacent to $v_j$,
then we add to $f$ the clause $x_i\vee x_j$.
\end{itemize}
\end{itemize}
Clearly, setting $x_i$ to true if $u_i\in I$ and to false otherwise
for every $i\in \left\{ 1,\ldots,\frac{n-1}{2}-k\right\}$
yields a satisfying truth assignment for $f$.
Conversely, given a satisfying truth assignment for $f$, then 
$$
I'=I_0\cup 
\left\{ u_i:\mbox{$x_i$ is true and $i\in \left\{ 1,\ldots,\frac{n-1}{2}-k\right\}$}\right\}
\cup
\left\{ v_i:\mbox{$x_i$ is false and $i\in \left\{ 1,\ldots,\frac{n-1}{2}-k\right\}$}\right\}
$$
is an independent set of order $a$, and, hence,
a suitable choice for $I$.

Now, let $G$ be a given graph of order $n$ and size $m$,
for which we want to decide whether $\alpha(G)$ equals its annihilation number $a$.
If $m=0$, then $\alpha(G)=a=n$. Hence, we may assume that $m>0$.
First, we compute the annihilation number $a$ of $G$.
Now, we consider all $n$ choices for the vertex $r$.
For each choice of $r$, we determine a maximum matching $N$ of $G-r$ \cite{ed},
set up the corresponding $2$-{\sc Sat} formula $f$,
check the satisfiability of $f$, and 
determine a satisfying truth assignment provided that $f$ is satisfiable \cite{asplta}.
All these steps can be executed in polynomial time using standard algorithms.

If $|N|=\frac{n-1}{2}-k$ and $f$ is satisfiable,
then the set $I'$ as above is a maximum independent set certifying $\alpha(G)=a$.
Hence, if $\alpha(G)=a$, then this approach yields an independent set of order $a$
certifying $\alpha(G)=a$, and, if $\alpha(G)<a$, then, for every choice of $r$,
either $|N|<\frac{n-1}{2}-k$ 
or $|N|=\frac{n-1}{2}-k$ but $f$ is not satisfiable.
This completes the proof of Theorem \ref{theorem1}.
\end{proof}
We proceed to the following.

\begin{proof}[Proof of Theorem \ref{theorem2}]
Let $G$ be as specified at the beginning of this section.
Arguing as in the proof of Theorem \ref{theorem1},
we obtain that $G$ has a matching $M$ 
saturating all but at most $2k$ vertices in $I$,
and only containing edges between $I$ and $R$.
Suppose that $\alpha(G)\geq a-\ell$, that is, in particular, $|I|\geq a-\ell$.
Since 
$|R|-|M|
=n-|I|-|M|
\leq n-|I|-(|I|-2k)
=n-2|I|+2k
\leq n-2a+2\ell+2k
=2\ell+1$,
the matching $M$ covers all but at most $2\ell+1$ vertices in $R$.
In particular,
the matching number $\mu(G)$ of $G$ satisfies
$\mu(G)\geq |M|\geq |R|-(2\ell+1)=n-\alpha(G)-2\ell-1\geq n-a-2\ell-1$.
Now, 
$\alpha(G)\geq a-\ell$
if and only if 
the vertex cover number $\tau(G)=n-\alpha(G)$ of $G$ satisfies
$\tau(G)\leq n-a+\ell$.
Therefore, 
setting $p:=n-a+\ell-\mu(G)\leq n-a+\ell-n+a+2\ell+1=3\ell+1$,
we obtain that $\alpha(G)\geq a-\ell$
if and only if 
$\tau(G)\leq \mu(G)+p$.
Combining results from \cite{ed},\cite{mi}, and \cite{ra},
in time $O\left(15^{(3\ell+1)} \ell (n+m)^3\right)$,
one can determine $a$, $\mu(G)$, and $p$, 
check whether $p\leq 3\ell+1$
(if this is not the case, then $\alpha(G)<a-\ell$), 
and (provided that $p\leq 3\ell+1$)
decide whether $\tau(G)\leq \mu(G)+p$,
which completes the proof.
\end{proof}

\end{document}